\newcommand{\heuteIst}{July 10, 2010 }
\theoremstyle{plain}
\newtheorem{theorem}{Theorem}[section]
\theoremstyle{definition}
\newtheorem{example}[theorem]{Example}
\theoremstyle{remark}
\newcommand{\reals}{\mathbb{R}}
\newcommand{\integers}{\mathbb{Z}}
\newcommand{\rationals}{\mathbb{Q}}
\DeclareMathOperator{\id}{id}
\newcommand{\tensor}{\otimes}
\newcommand{\iso}{\cong}
\DeclareMathOperator{\im}{im}      %image
\DeclareMathOperator{\coker}{coker}
\newcommand{\forget}[1]{}
\global\let\c@equation=\c@theorem}
\begin{document}
\pagestyle{myheadings}
\markboth{Ulrich Bunke and Thomas Schick}{Corrigendum: Uniqueness of smooth extensions of generalized cohomology theories}

\date{Last compiled \today; last edited  \heuteIst or later}

\title{Corrigendum: Uniqueness of smooth extensions of generalized cohomology theories}

\author{Ulrich Bunke\thanks{ulrich.bunke@mathematik.uni-regensburg.de}\\
NWF I - Mathematik,
Universit{\"a}t Regensburg\\
93040 Regensburg, 
Germany   \and Thomas Schick\thanks{
\protect\href{mailto:schick@uni-math.gwdg.de}{e-mail:
  schick@uni-math.gwdg.de}
\protect\\
\protect\href{http://www.uni-math.gwdg.de/schick}{www:~http://www.uni-math.gwdg.de/schick}
\protect\\
Fax: ++49 -551/39 2985\protect\\
partially funded by the Courant Research Center "`Higher order structures in Mathematics"'
within the German initiative of excellence
}\\
Georg-August-Universit\"at G{\"o}ttingen\\
Bunsenstr.~3, 37073 G\"ottingen, 
Germany}
\maketitle

\begin{abstract}
  The proof of Theorem 7.12 of ``Uniqueness of smooth cohomology
  theories'' by the authors of this note is not correct. The said theorem
  identifies the flat part of a 
  differential 
  extension of a generalized cohomology theory $E$ with
  $E\reals/\integers$ (there called ``smooth extension''). In this note, we
  give a correct proof. Moreover, we 
  prove slightly stronger versions of some of the other results of that paper.
\end{abstract}

\section{Introduction}

Given a generalized cohomology theory $E$, the paper \cite{BSuniqueness}
studies its differential extensions $\hat E$ (called there "`smooth"'
extension). These are characterized by a natural list of axioms \cite[Section 1]{BSuniqueness}. The main
question answered is: under which assumptions is $\hat E$ already determined
by $E$ and the axioms. 

One of the axioms requires a natural transformation $R\colon
\hat E(X)\to \Omega(X; E(pt)\tensor\reals)$. Its kernel $\ker(R)=: \hat
E_{flat}$ is automatically a homotopy
invariant functor, and a second goal of \cite{BSuniqueness} is to study
this functor, and in particular to identify conditions under which it
is isomorphic to $E\reals/\integers$, ``$E$ with coefficients in
$\reals/\integers$''. The main result of \cite{BSuniqueness} about this is
\cite[Theorem 7.12]{BSuniqueness}. It turns out that, although the statement
of this theorem essentially is correct, the proof given there is not. The
purpose of this 
corrigendum is to provide a correct proof.

 Along the way, we observe several
small generalizations of some of the results obtained in the paper.
Differential extensions naturally come in two flavors: there are versions which
are defined on all smooth manifolds, and versions defined only for the
category of compact smooth manifolds (but possibly with boundary). Indeed,
important examples exist which are only constructed for compact manifolds. 
Rather strong assumptions on $E$ are required to prove the uniqueness results
of \cite{BSuniqueness}. In the present note we remark that this can be relaxed
slightly. 

\section{Generalizations of the uniqueness results}

All the uniqueness results of \cite{BSuniqueness} require that the generalized
cohomology theory $E$ is rationally even, i.e.~$E^{2k+1}(pt)\tensor \rationals
=\{0\}$ for all $k\in\integers$ and that $E^*(pt)$ is countably
generated as abelian group. Additionally, if the differential extension is only
defined on the category 
of compact manifolds, we require that $E^k(pt)$ is finitely generated as
abelian group for all $k\in\integers$ and that $E^{2k+1}(pt)=\{0\}$. 

\begin{theorem}\label{theo:gen_uniq}
  The uniqueness statements of \cite{BSuniqueness} for differential extensions
  defined on the category of compact manifolds also hold if we drop the
  additional assumption that $E^{2k+1}(pt)=\{0\}$. Note that we still require
  that these groups are finitely generated abelian torsion group, i.e.~finite
  abelian groups.

  This applies in particular to \cite[Theorem 3.10, Corollary 4.4, Theorem
  5.5]{BSuniqueness}. 
\end{theorem}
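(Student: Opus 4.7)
The plan is to revisit each of the cited uniqueness statements in \cite{BSuniqueness} --- Theorem 3.10, Corollary 4.4, and Theorem 5.5 --- locate every place where the hypothesis $E^{2k+1}(pt)=\{0\}$ is used, and verify that it can be replaced by the weaker hypothesis that the odd-degree groups are finite (which follows automatically from finite generation together with the standing assumption of rational evenness).

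The key observation driving the argument is that finite abelian groups are torsion, while the curvature map $R\colon \hat E^n(M)\to\Omega^n(M;E^*(pt)\tensor\reals)$ lands in a torsion-free module. Consequently, any class $\hat x\in\hat E^n(M)$ whose underlying topological class $I(\hat x)\in E^n(M)$ is torsion satisfies $R(\hat x)=0$, so $\hat x\in\hat E^n_{flat}(M)$. Whenever the original proofs invoked $E^{2k+1}(pt)=\{0\}$ to conclude that certain odd-degree contributions vanish, we will instead observe that these contributions are torsion, hence absorbed into the flat subtheory $\hat E_{flat}$, where uniqueness is already established by the axioms (and, in the cases where it is needed, by the identification $\hat E_{flat}\cong E\reals/\integers$ of Theorem~7.12, whose corrected proof appears below).

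With this reduction in hand, the inductive construction of a comparison natural transformation between two differential extensions of $E$, which underlies each of \cite[Theorem 3.10, Corollary 4.4, Theorem 5.5]{BSuniqueness}, proceeds as before: on the curvature side nothing is altered, since torsion classes do not contribute to differential forms; on the flat side the additional contributions arising from $E^{2k+1}(pt)$ are pinned down by the uniqueness for the underlying cohomology theory with $\reals/\integers$-coefficients. Concretely, in each proof one writes down the modified comparison carefully and chases the relevant Mayer--Vietoris sequences and the short exact sequence
\[
0\to \hat E_{flat}^n(M)\to \hat E^n(M)\to \im R\to 0
\]
through a Five Lemma argument, with the analogous data for $\hat F$.

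The main obstacle is bookkeeping. One must verify, in each step of the inductions, that the splitting of $\hat E$ into a flat part and a curvature-carrying part is preserved by all maps appearing in the argument, so that the torsion contributions from odd-degree $E^*(pt)$ and the form-valued contributions from even-degree $E^*(pt)$ do not interact in an uncontrolled way. I expect this decoupling to work formally, since forms take values in a torsion-free module, but confirming it requires going carefully through the proofs of the three cited results and ruling out any step that secretly relied on the full vanishing $E^{odd}(pt)=\{0\}$ rather than merely on its being torsion.
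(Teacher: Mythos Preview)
Your proposal misidentifies where the hypothesis $E^{2k+1}(pt)=\{0\}$ actually enters the original proofs, and the mechanism you propose to replace it does not work.

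First, a concrete error: you assert that if $I(\hat x)$ is torsion then $R(\hat x)=0$. This is false. The axioms only give $[R(\hat x)]=ch(I(\hat x))$ in de Rham cohomology, so torsion of $I(\hat x)$ forces $R(\hat x)$ to be \emph{exact}, not zero; hence $\hat x$ need not lie in $\hat E_{flat}$. The decoupling into ``flat torsion part'' and ``form-valued part'' you rely on does not hold at the level of differential cocycles.

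Second, and more importantly, the vanishing of $E^{2k+1}(pt)$ in \cite{BSuniqueness} is not invoked to control odd-degree form contributions or to push classes into the flat subtheory. It is used in the approximation of classifying spaces by compact manifolds via \cite[Proposition~2.1]{BSuniqueness}: one builds finite connected CW-complexes as skeleta of models of the classifying spaces, and one needs their higher homotopy groups to be finitely generated. The original hypothesis guarantees these complexes are simply connected, whence the finite generation follows. The paper's fix is purely topological: if $E^{2k+1}(pt)$ is merely finite, the complexes have finite fundamental group, so their universal covers are still finite CW-complexes and the same homotopy-theoretic finiteness argument applies unchanged. Everything else in the proofs of \cite[Theorem~3.10, Corollary~4.4, Theorem~5.5]{BSuniqueness} then goes through verbatim. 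Your Five Lemma/flat-part strategy never touches this step and therefore cannot close the gap.
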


\begin{example}
  The uniqueness results of \cite{BSuniqueness} also hold for oriented
  bordism, unoriented bordism, spin bordism or real K-theory. 
\end{example}
\begin{proof}
  All these cohomology theories are degree-wise finitely generated and
  rationally even. Note, however, that they do not satisfy the previously made
  conditions in \cite{BSuniqueness}.
\end{proof}

\begin{proof}[Proof of Theorem \ref{theo:gen_uniq}.]
  The proof of the uniqueness results of \cite{BSuniqueness} is based on the
  approximation of classifying spaces for $E^{2k}$ by manifolds to which the
  differential extension can be applied. The important technical tool is
  \cite[Proposition 2.1]{BSuniqueness}. Along the way of the proof, we
  construct finite connected CW-complexes $X$ and we have to know that their
  higher homotopy 
  groups are finitely generated abelian groups. This is known to be the case
  if $X$ is simply connected, which follows from the assumption that our
  $E^{2k+1}(pt)=0$ for all $k\in\integers$. However, the required finite
  generation also follows if $X$ has a finite fundamental group, by applying
  the old argument to the universal covering, which in this case is still a
  finite CW-complex. Correspondingly, all proofs work exactly the same way if
  $E^{2k+1}(pt)$ is finite for all $k\in\integers$.
\end{proof}

\section{Correction of \cite[Theorem 7.12]{BSuniqueness}}
  
Given a differential extension $(\hat E,\int)$ of any generalized cohomology
theory, which comes with the additional structure of a transformation
``integration over an $S^1$-fiber'', we show in \cite[Theorem
7.11]{BSuniqueness} that $\hat E_{flat}$, the kernel of the curvature
transformation, is a homotopy invariant functor with Mayer-Vietoris
sequences, defined either on the category of compact manifolds, or on the
category of all manifolds. On can then extend this uniquely to a generalized
cohomology theory, either defined on all finite CW-complexes, or defined on
all finite dimensional countable CW-complexes. 

Moreover, we prove that 
the transformation $a$ restricts to a natural transformation $a\colon
E\reals^{*-1}(X)\to \hat E_{flat}^*(X)$ and (by construction, although this is not
stated explicitly in \cite{BSuniqueness}) the map $I$ restricts to a natural
transformation $I\colon \hat E_{flat}^*(X)\to E^*(X)$ of cohomology theories
defined on finite CW-complexes. 

Moreover, for each finite CW-complex $X$ we get a long exact sequence
\begin{equation*}
  \to E\reals^{*-1}(X)\xrightarrow{a} \hat E_{flat}^*(X)\xrightarrow{I}
  E^*(X)\xrightarrow{ch}   E\reals^*(X)\to 
\end{equation*}

In other words, the cohomology theory on finite CW-complexes $\hat E_{flat}^*$
fits into the same exact sequence as $E\reals/\integers^{*-1}$. The goal of
\cite[Theorem 7.12]{BSuniqueness} now is to compare these two cohomology
theories. We can realize $\hat E_{flat}$ by a spectrum $U$ (which is unique
up to equivalence) and the transformation $a$ (as well as $I$) by a map of
spectra $a\colon E\reals\to U$ (which are in general not uniquely determined by the transformation of
cohomology theories defined on the category of finite CW-complexes only). Let
the spectrum $F$ be the fiber of $a\colon E\to U$.

\begin{theorem}
  Let $E$ be a generalized cohomology theory and $(\hat E,\int)$ a
  differential 
  extension with integration as above. If $E^k$ is finitely generated for each
  $k$ then we
  obtain a natural 
  transformation of cohomology theories $\phi\colon \hat E_{flat}^*\to
  E\reals/\integers^{*-1}$ (and an associated transformation $\phi_F\colon
  F\to E$) such that one has a commutative diagram
  \begin{equation}\label{eq:general}
    \begin{CD}
      @>>> F^*(X) @>>> E\reals^*(X) @>a>> \hat E_{flat}^{*+1}(X) @>>> \\
      && @VV{\phi_F}V @VV=V @VV{\phi}V\\
      @>>> E^*(X) @>>> E\reals^*(X) @>>> E\reals/\integers^*(X) @>>>
    \end{CD}
  \end{equation}
  For each finite CW-complex $X$, we know that $\ker(a)=\im(E^*(X))$ and
  $\coker(a)\iso E^{*+1}_{tors}(X)$, where $E^*_{tors}(X)$ is the torsion
  subgroup of $E^*(X)$. Therefore, the above diagram yields the diagram of
  exact sequences
  \begin{equation}\label{eq:special}
    \begin{CD}
      E^*(X) @>>> E\reals^*(X) @>a>> \hat E^{*+1}_{flat}(X) @>>>
      E^{*+1}_{tors}(X)\to 0\\
      @VV=V @VV=V @VV{\phi}V \\
      E^*(X) @>>> E\reals^*(X) @>>> E\reals/\integers^*(X) @>>>
      E^{*+1}_{tors}(X) \to 0.
    \end{CD}
  \end{equation}
  Note that we do not claim (and don't know) whether the diagram can be
  completed to a commutative diagram by $\id\colon E_{tors}^{*+1}(X)\to
  E_{tors}^{*+1}(X)$. 
  If $E^*_{tors}(pt)=0$ then $\phi$ is an isomorphism of cohomology theories.
\end{theorem}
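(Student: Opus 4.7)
My plan is first to construct a spectrum-level lift $\phi_F\colon F\to E$ of the fiber inclusion $i\colon F\to E\reals$ through the rationalization $j\colon E\to E\reals$, and then to derive $\phi$ as the induced map on cofibers. The starting observation, immediate from a diagram chase in \eqref{eq:general}, is that on every finite CW-complex $X$ the composition of natural transformations $F^*(X)\to E\reals^*(X)\to E\reals/\integers^*(X)$ vanishes: by exactness of the top row, the image of $F^*(X)$ in $E\reals^*(X)$ equals $\ker(a)=\im(ch\colon E^*(X)\to E\reals^*(X))$, and this image lies in $\ker(E\reals^*(X)\to E\reals/\integers^*(X))$ by exactness of the bottom row.

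The main obstacle will be upgrading this vanishing on finite CW-complexes to a spectrum-level null-homotopy of $F\to E\reals\to E\reals/\integers$, i.e.\ to showing that the corresponding class in $[F,E\reals/\integers]$ vanishes. Here the hypothesis that $E^k$ is finitely generated for each $k$ should enter: a skeletal Atiyah--Hirzebruch analysis of the Hom-set should control the potential phantom contribution and force the class to be zero. A chosen null-homotopy, combined with the fact that $E$ is the homotopy fiber of $E\reals\to E\reals/\integers$, will then produce $\phi_F\colon F\to E$ with $j\circ\phi_F=i$. The commutative left square extends, by the universal property of the cofibers of the two rows of \eqref{eq:general}, to a canonical map $\phi$ filling in the right column and making the diagram commute; this is the desired natural transformation $\phi\colon\hat E_{flat}^*\to E\reals/\integers^{*-1}$. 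Diagram \eqref{eq:special} is then a reformulation of \eqref{eq:general} via the already-established identifications $\ker(a)=\im(E^*(X))$ and $\coker(a)\cong E_{tors}^{*+1}(X)$.

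For the final isomorphism claim, I would observe that when $E_{tors}^*(pt)=0$ the short exact sequences
\[
0\to E^*(pt)\otimes \reals/\integers\to \hat E_{flat}^{*+1}(pt)\to E_{tors}^{*+1}(pt)\to 0
\]
and
\[
0\to E^*(pt)\otimes \reals/\integers\to E\reals/\integers^*(pt)\to E_{tors}^{*+1}(pt)\to 0
\]
obtained by evaluating \eqref{eq:special} on a point collapse to give $\hat E_{flat}^{*+1}(pt)\cong E\reals/\integers^*(pt)\cong E^*(pt)\otimes \reals/\integers$, and the commutativity of \eqref{eq:general} forces $\phi$ to act as the identity on this common group. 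Hence $\phi$ is an isomorphism on a point, and therefore on all finite CW-complexes by the standard fact that a natural transformation of cohomology theories on finite CW-complexes which is an isomorphism on coefficients is automatically an isomorphism (via the suspension isomorphism and Mayer--Vietoris).
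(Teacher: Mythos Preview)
Your proposal is correct and follows essentially the same route as the paper: identify the composite $F\to E\reals\to E\reals/\integers$ as a phantom map via the equality $\ker(a)=\im(ch)$, kill the phantom using the finite-generation hypothesis, and then fill in $\phi_F$ and $\phi$ using the axioms of the triangulated category of spectra applied to the two distinguished triangles. The only cosmetic differences are that the paper cites \cite[Section 8]{BSuniqueness} for the vanishing of phantoms (your ``skeletal Atiyah--Hirzebruch analysis'' is presumably the same argument) and invokes the five lemma directly for the final isomorphism on the point, whereas you unpack the short exact sequences explicitly; these are equivalent.
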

\begin{proof}
  We know by the axioms that for each compact manifold $X$ the kernel of
  $a\colon E\reals^*(X) \to \hat E_{flat}^{*+1}(X)$ is exactly the image of
  the canonical map $ch\colon E^*(X)\to E\reals^*(X)$. However, because of the
  long exact sequence associated to the fiber sequence $F\to E\xrightarrow{a}
  U$ of spectra, we conclude that the image of $F(X)$ in $E\reals(X)$
  coincides with the image of $E(X)$. This means by definition that the
  composed map of spectra  $F\to E\reals\to E\reals/\integers$ is a phantom
  map. However, under the assumption the $E^k$ is finitely generated for each
  $k$, it is shown in \cite[Section 8]{BSuniqueness} that such a phantom map
  is automatically trivial (absence of phantoms). The existence of the
  required factorizations $\phi$, $\phi_F$ is now a direct consequence for the
  triangulated category of spectra and the fiber sequences (distinguished
  triangles) $F\to E\reals\to U$, $E\to E\reals\to E\reals/\integers$. By
  definition of factorization, we obtain the commutative diagram of exact
  sequences 
  \eqref{eq:general}, which because of the knowledge about image and kernel of
  $a$ specializes to \eqref{eq:special}.

  If $E_{tors}^*(pt)=0$, the five lemma implies that $\phi\colon U\to
  E\reals/\integers$ induces an isomorphism on the point and therefore for all
  finite CW-complexes.
\end{proof}

\begin{bibdiv}
  \begin{biblist}
    \bib{BSuniqueness}{article}{
    title={    Uniqueness of smooth extensions of generalized cohomology
      theories}, 
    author={Bunke, Ulrich},
    author={ Schick, Thomas},
    doi={10.1112/jtopol/jtq002},
    journal={Journal of Topology},
    volume={3},
    date={2010},
    pages={110--156}
}

  \end{biblist}
\end{bibdiv}

%  {\small
%  \bibliographystyle{plain}
%  \bibliography{uniqueness}
%  }

\end{document}